\newcommand{\CC}{\mathbb{C}}
\newcommand{\kk}{\Bbbk}
\def\SL{\operatorname{SL}}
\def\SL2{\operatorname{SL}_{2}(K)}
\def\GL2{\operatorname{GL}_{2}(K)}
\def\INVSL2{$K[V]^{operatorname{SL}_{2}(K)}$}
\def\INVSO2{$K[V]^{operatorname{SO}_{2}(K)}$}
\def\INVGL2{$K[V]^{operatorname{GL}_{2}(K)}$}
\def\GL{\operatorname{GL}}
\def\SL{\operatorname{SL}}
\def\N{\mathbb{N}}
\def\Tr{\operatorname{Tr}}
\newtheorem{Lemma}{Lemma}
\newtheorem{Theorem}[Lemma]{Theorem}
\newtheorem{Proposition}[Lemma]{Proposition}
\newtheorem{Corollary}[Lemma]{Corollary}
\newtheorem*{Corollary of Conjecture}{Corollary of Conjecture}
\theoremstyle{definition}
\theoremstyle{remark}
  \newtheorem{rem}[Lemma]{Remark}
\newtheoremstyle{Acknowledgments}
  {}
    {}
     {}
     {}
    {\bfseries}
    {}
     {.5em}
     {\thmname{#1}\thmnumber{ }\thmnote{ (#3)}}
\theoremstyle{Acknowledgments}
\title[Degree bounds for modular covariants]{Degree bounds for modular covariants}
\author{Jonathan Elmer}
\address{Middlesex University\\
The Burroughs, Hendon, London\\
NW4 4BT UK}
\email{j.elmer@mdx.ac.uk}
\author{M\"{u}fit Sezer}
\address{Bilkent University, Department of Mathematics\\
Cankaya, Ankara \\06800 Turkey } \email{sezer@fen.bilkent.edu.tr}
\thanks{The second author is supported by a grant from T\"UBITAK:115F186 }
\date{\today}
\subjclass[2010]{13A50}
\keywords{Invariant theory, modular representation, cyclic group, module of covariants, Noether bound}
\begin{document}
\maketitle
\begin{abstract} Let $V,W$ be representations of a cyclic group $G$ of prime order $p$ over a field $\kk$ of characteristic $p$. The module of covariants $\kk[V,W]^G$ is the set of $G$-equivariant polynomial maps $V \rightarrow W$, and is a module over $\kk[V]^G$. We give a formula for the Noether bound $\beta(\kk[V,W]^G,\kk[V]^G)$, i.e. the minimal degree $d$ such that $\kk[V,W]^G$ is generated over $\kk[V]^G$ by elements of degree at most $d$.
\end{abstract}
\section{Introduction}

Let $G$ be a finite group, $\kk$ a field and $V$, $W$ a pair of finite-dimensional $\kk G$-modules. Let $\kk[V]$ denote the symmetric algebra on the dual $V^*$ of $V$ and let $\kk[V,W] = \kk[V] \otimes_{\kk} W$. Elements of $\kk[V]$ represent polynomial functions $V \rightarrow \kk$ and elements of $\kk[V,W]$ represent polynomial functions $V \rightarrow W$; for $f \otimes w \in \kk[V,W]$ the corresponding function takes $v$ to $f(v)w$. $G$ acts by algebra automorphisms on $\kk[V]$ and hence diagonally on $\kk[V,W]$. The fixed points $\kk[V,W]^G$ of this action are called covariants and represent $G$-equivariant polynomial functions $V \rightarrow W$. The the fixed points $\kk[V]^G$ are called invariants. For $f \in \kk[V]^G$ and $\phi \in \kk[V,W]^G$ we define the product
\[f\phi(v) = f(v)\phi(v).\]
Then $\kk[V]^G$ is a $\kk$-algebra and $\kk[V,W]^G$ is a finite $\kk[V]^G$-module.
Modules of covariants in the non-modular case ($|G| \neq 0 \in \kk$) were studied by Chevalley \cite{Chevalley}, Shephard-Todd \cite{ShephardTodd}, Eagon-Hochster \cite{HochsterEagon}. In the modular case far less is known, but recent work of Broer and Chuai \cite{BroerChuaiRelative} has shed some light on the subject. A systematic attempt to construct generating sets for modules of covariants when $G$ is a cyclic group of order $p$ was begun by the first author in \cite{ElmerCov}.

Let $A = \oplus_{d \geq 0} A_d$ be any graded $\kk$-algebra and $M = \sum_{d \geq 0} M_d$ any graded $A$-module. Then the Noether bound $\beta(A)$ is defined to be the minimum degree $d>0$ such that $A$ is generated by the set $\{a: a \in A_k, k \leq d\}$.  Similarly, $\beta(M,A)$ is defined to be the minimum degree $d>0$ such that $M$ is generated over $A$ by the set $\{m: m \in M_k, k \leq d\}$, and we sometimes write $\beta(M)=\beta(M,A)$ when the context is clear. 

Noether famously showed that $\beta (\CC[V]^G) \leq |G|$ for arbitrary finite $G$, but computing Noether bounds in the modular case is highly nontrivial. When $G$ is cyclic of prime order, the second author along with Fleischmann, Shank and Woodcock \cite{FleischmannSezerWoodcock} determined the Noether bound for any
$\kk G$-module. The purpose of this short article is to find results similar to those in \cite{FleischmannSezerWoodcock} for covariants. Our main result can be stated concisely as:
\begin{Theorem}\label{main}
Let $G$ be a cyclic group of order $p$, $\kk$ a field of characteristic $p$, $V$ a reduced $\kk G$-module  and $W$ a nontrivial indecomposable $\kk G$-module. Then 

\[\beta(\kk[V,W]^G) = \beta(\kk[V]^G)\] unless $V$ is indecomposable of dimension 2.
\end{Theorem}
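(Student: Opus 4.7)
The plan is to establish both the upper bound $\beta(\kk[V,W]^G) \leq \beta(\kk[V]^G)$ and the matching lower bound. I would begin by reducing to the case where $V$ is indecomposable: if $V = V' \oplus V''$ with both summands nontrivial, then one can filter elements of $\kk[V,W]^G$ by their degree in $V''^\ast$-variables and argue, in analogy with the reduction used in \cite{FleischmannSezerWoodcock} for invariants, that high-degree generators of the module of covariants decompose as products of lower-degree invariants with covariants on a smaller summand, modulo transfer corrections.

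For the indecomposable case $V = V_n$ with $n \geq 3$, I would fix a triangular basis $x_1, \ldots, x_n$ of $V_n^\ast$ on which $\Delta = \sigma - 1$ acts as a Jordan block, and fix a similar basis of $W$. Every covariant in $\kk[V,W]^G$ is a $\kk$-linear combination of elements produced by the transfer $\Tr^G$ applied to monomials $x^\alpha \otimes w$, together with trivial covariants of the form $f \otimes w_0$ for $f \in \kk[V]^G$ and $w_0 \in W^G$. Controlling the degrees of these generators reduces to bounding the total degree of the monomial supports of the transfer-generators; the norm $N(x_1)$ of degree $p$ and the terminal variable $x_n$ of degree $1$ play a critical role, as they can be factored out of any transfer whose leading term is sufficiently rich in $x_1$ or $x_n$. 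Combined with the Fleischmann--Sezer--Woodcock bound on $\beta(\kk[V_n]^G)$, this would yield the upper bound. The matching lower bound would come from exhibiting an explicit covariant---roughly, the transfer of a near-maximal monomial in $\kk[V_n]$ tensored with a suitable element of $W$---whose leading term cannot be written as a product of an invariant and a lower-degree covariant.

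The main obstacle will be handling $\Tr^G$ precisely: its image is typically understood through leading-monomial (SAGBI-type) arguments, and one must check, for every nontrivial indecomposable $W$, that there is enough freedom in $W$ to produce a nonvanishing transfer-generator of degree $\beta(\kk[V]^G)$, while showing that no covariant demands a strictly higher degree. The exclusion of $V = V_2$ fits naturally into this framework: when $V = V_2$ the invariant ring $\kk[V_2]^G = \kk[x_2, N(x_1)]$ is a polynomial ring whose rigidity forces covariants to need generators of degree strictly greater than $\beta(\kk[V_2]^G) = p$; for $V_n$ with $n \geq 3$ the presence of transfer invariants among the generators of $\kk[V_n]^G$ provides precisely the flexibility needed to keep covariants within the same degree bound. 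Thus the proof will also have to verify, case by case, that no analogous obstruction appears for $n \geq 3$ and that the decomposable cases inherit their bound from the largest indecomposable summand.
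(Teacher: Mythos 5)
Your plan has three concrete problems, the first of which is fatal to its overall architecture. The proposed reduction to indecomposable $V$ cannot work, because neither side of the asserted equality is controlled by the largest indecomposable summand: for reduced $V=\oplus_{j=1}^m V_{n_j}$ the results quoted in the paper give $\beta(\kk[V]^G)=m(p-1)+(p-2)$ when some $n_j\geq 4$ (and $m(p-1)+1$, resp.\ $m(p-1)$, in the remaining cases), a quantity growing linearly in the number of summands $m$; for instance $\beta(\kk[2V_2]^G)=2(p-1)>p=\beta(\kk[V_2]^G)$ when $p>2$. So your closing claim that ``the decomposable cases inherit their bound from the largest indecomposable summand'' is false, and an argument that first settles $V=V_n$ and then passes to direct sums is proving the wrong statement. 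The paper never makes this reduction: it works with decomposable $V$ throughout, using the multidegree and the norms $N_j=\prod_{k}\sigma^k x_{1,j}$ of \emph{each} summand to show (Proposition \ref{sum}) that a covariant whose $j$-th partial degree exceeds $p-n_j$ splits as $N_jh_1+h_2$ with $h_2$ a transfer covariant, and (Proposition \ref{transfer}) that transfer covariants of degree greater than $\beta(\kk[V],\kk[V]^G)$ are decomposable; the threshold $mp-\dim(V)$ that emerges genuinely involves all the summands.

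Two further points. Your structural description of covariants is incorrect: it is not true that every element of $\kk[V,W]^G$ is a combination of transfers $\Tr(x^\alpha\otimes w)$ and trivial covariants $f\otimes w_0$ with $w_0\in W^G$. The correct description (Proposition \ref{elmer}, from \cite{ElmerCov}) is that every covariant has the form $\sum_j\Delta^{j-1}(f)w_j$ for some $f\in\kk[V]$ of weight at most $\dim W$; the indecomposable but non-free summands of $\kk[V]\otimes W$ contribute covariants that are neither transfers nor of the form $f\otimes w_0$. Finally, you have the $V_2$ exclusion backwards: by \cite{ElmerCov} one has $\beta(\kk[V_2,W]^G)=\dim(W)-1\leq p-1$, which is strictly \emph{smaller} than $\beta(\kk[V_2]^G)=p$, not larger. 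The actual mechanism for the lower bound in the paper is Lemma \ref{lb} together with Corollary \ref{lbcor}: an indecomposable transfer $\Delta^{p-1}(f)$ in top generating degree yields an indecomposable transfer covariant of the same degree, and such a transfer exists whenever $\beta(\kk[V]^G)>\max(p,mp-\dim(V))$ (supplemented by explicit generators in the small cases $V=V_3$ and $V=2V_2$ with $p=2$); it is exactly this hypothesis that fails for $V=V_2$, where the degree-$p$ generator is the norm. Your ``rigidity of the polynomial ring $\kk[x_2,N]$'' heuristic points in the wrong direction and would not identify the correct dichotomy.
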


 \section{Preliminaries} 
For the rest of this article, $G$ denotes a cyclic group of order $p>0$, and we let $\kk$  be a field of characteristic $p$. 
We choose a generator $\sigma$ for $G$. Over $\kk$, there are $p$ indecomposable representations $V_1, \dots ,V_p$ and each indecomposable representation $V_i$ is afforded by a Jordan block of size $i$. Note that $V_p$ is isomorphic to the free module $\kk G$, and this is the unique free indecomposable $\kk G$ -module.
 
 Let $\Delta=\sigma-1\in \kk G$. We define the transfer map $\Tr:\kk [V]\rightarrow \kk [V]$ by $\sum_{1\le i\le p}\sigma^i$. Notice that we also have $\Tr=\Delta^{p-1}$. Invariants that are in the image of $\Tr$ are called transfers.
 
 \begin{rem}\label{repthy}
 Let $e_1, \dots , e_i$ be an upper triangular basis for the $i$-dimensional indecomposable representation $V_i$. The $\Delta (e_j)=e_{j-1}$ for $2\le j\le i$ and $\Delta (e_1)=0$. Therefore $\Delta^j (V_i)=0$ for all $j\ge i$. 
 Note that for an indecomposable module $V_i$  we have $\Delta(V_i)\cong V_{i-1}$ for $2\le i\le p$ and $\Delta (V_1)=0$. It follows that an invariant $f$ is in the image of the linear map $\Delta^j:\kk [V]\rightarrow \kk [V]$ if and only if it is a linear combination of fixed points in indecomposable modules of dimension at least $j+1$.
 In particular, an invariant is in the image of the transfer map ($=\Delta^{p-1}$) if and only if it is a linear combination of fixed points of free $\kk G$-modules.
 \end{rem}
 
 We assume that $V$ and $W$ are $\kk G$-modules with  $W$  indecomposable and we choose a basis $w_1, \dots ,w_n$ for $W$ so that we have 
$$\sigma w_i=\sum_{1\le j\le i} (-1)^{i-j}w_j,$$
for $1\le i\le n$.  For $f\in \kk [V]$ we define the \emph{weight} of $f$ to be the smallest  positive integer $d$ with $\Delta^d (f)=0$. Note that $\Delta^p=(\sigma-1)^p=0$, so the weight of a polynomial is at most $p$.
 
A useful description of covariants is given in \cite{ElmerCov}. We include this description here for completeness.
\begin{Proposition}\cite[Proposition~3]{ElmerCov}
\label{elmer}
Let $f\in \kk [V]$ with weight $d\le n$. Then $$\sum_{1\le j\le d}\Delta ^{j-1}(f)w_j\in \kk [V,W]^G.$$
 Conversely, if 
 $$f_1w_1+f_2w_2+ \cdots +f_nw_n \in \kk[V,W]^G,$$
 then there exists $f\in \kk [V]$ with weight  $\le n$ such that $f_j=\Delta^{j-1 }(f)$ for $1\le j\le n$.
\end{Proposition}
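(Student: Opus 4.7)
The plan is to prove both directions by direct computation, exploiting the identity $\sigma = 1 + \Delta$ on $\kk[V]$ together with the given formula $\sigma w_i = \sum_{j\le i}(-1)^{i-j}w_j$ on $W$. The upshot is that $\sigma$ on $W$ and $\Delta$ on $\kk[V]$ interact in a way that makes the whole statement essentially bookkeeping of alternating sums.

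For the forward direction, I would set $\phi = \sum_{j=1}^{d}\Delta^{j-1}(f)w_j$ and compute the coefficient of $w_k$ in $\sigma(\phi)$ by collecting contributions from each $\sigma\bigl(\Delta^{j-1}(f)w_j\bigr) = \bigl(\Delta^{j-1}(f)+\Delta^{j}(f)\bigr)\sum_{i\le j}(-1)^{j-i}w_i$. Writing $a_j = \Delta^{j-1}(f)$ (with $a_j=0$ for $j>d$, by the hypothesis on the weight), the coefficient of $w_k$ becomes $\sum_{j\ge k}(-1)^{j-k}(a_j+a_{j+1})$; after re-indexing the second sum and telescoping, everything cancels except $a_k$. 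Hence $\sigma(\phi) = \phi$.

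For the converse, let $\phi = \sum_{j=1}^{n} f_j w_j$ be $G$-invariant. Expanding $\sigma(\phi) = \phi$ and equating coefficients of $w_k$ gives the system
\[
f_k \;=\; \sum_{j\ge k}(-1)^{j-k}\sigma(f_j), \qquad 1\le k\le n.
\]
Taking $k=n$ shows $f_n = \sigma(f_n)$, so $f_n \in \kk[V]^G$. I would then prove by downward induction on $k$ that $\Delta(f_k) = f_{k+1}$: substituting $\sigma(f_j) = f_j + \Delta(f_j) = f_j + f_{j+1}$ (using the inductive hypothesis, with the convention $f_{n+1}=0$) into the equation above and reorganising the alternating sum, all terms beyond the first cancel in pairs, leaving $\Delta(f_k) = f_{k+1}$. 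Consequently $f := f_1$ satisfies $f_j = \Delta^{j-1}(f)$ for all $j$, and $\Delta^{n}(f) = \Delta(f_n) = 0$ so its weight is at most $n$.

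I expect the only mildly tricky point to be keeping track of signs and index shifts in the alternating sums; there is no deep obstacle, since the statement is an identity rather than an existence result. A clean way to present both halves in parallel is to encode the invariance condition as a single $n\times n$ triangular system relating $(\sigma - 1)$ on the $f_j$'s to the $w_j$-coefficients, so that inverting the triangular change of basis built from the $(-1)^{i-j}$'s immediately produces the required $\Delta$-chain.
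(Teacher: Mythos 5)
Your proof is correct. Note that the paper does not actually prove this statement --- it is quoted verbatim from \cite[Proposition~3]{ElmerCov} ``for completeness'' --- so there is no in-paper argument to compare against; your direct verification via $\sigma = 1+\Delta$ on $\kk[V]$ and the alternating formula for $\sigma w_i$ on $W$ is exactly the kind of bookkeeping the cited source carries out. Both telescoping computations check out: in the forward direction the coefficient of $w_k$ in $\sigma(\phi)$ collapses to $a_k$ because $a_j + a_{j+1}$ vanishes for $j \geq d$ (using $\Delta^d(f)=0$), and in the converse the downward induction correctly yields $\Delta(f_k) = f_{k+1}$ from the triangular system, with the base case $\Delta(f_n)=0$ guaranteeing the weight bound $\Delta^n(f)=0$. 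The one point worth making explicit in a final write-up is that $\sigma$ acts diagonally on $\kk[V]\otimes_\kk W$, which is what licenses the factorisation $\sigma(f_j w_j) = \sigma(f_j)\sigma(w_j)$ underlying both halves.
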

  
For a non-zero covariant $h=f_1w_1+f_2w_2+ \cdots +f_nw_n$, we define the \emph{support} of $h$ to be the largest integer $j$ such that $f_j\neq 0$. We denote the support of $h$ by $s(h)$.
We shall say $h$ is a \emph{transfer covariant} if there exists a non-negative integer $k$ and $f\in \kk[V]$ such that
$f_1=\Delta^k (f)$, $f_2=\Delta^{k+1} (f)$, $\cdots$, $f_{s(h)}=\Delta^{p-1}(f)$ 
 for some $f\in \kk [V]$.
  
We call a homogeneous invariant in $\kk[V]^G$ indecomposable if it is not in the subalgebra of $\kk[V]^G$ generated by invariants of strictly smaller degree. Similarly, a homogeneous covariant in $\kk[V,W]^G$  is indecomposable if it does not lie in the submodule of $\kk[V,W]^G$  generated by covariants of strictly smaller degree. 

\section{Upper bounds}
    We first prove a result on decomposability of a transfer covariant. In the proof below we set $\gamma = \beta(\kk[V],\kk[V]^G)$. 

 \begin{Proposition}
 \label{transfer}
 Let $f\in \kk[V]$  be homogeneous and $h=\Delta^k(f)w_1+\Delta^{k+1}(f)w_2+ \cdots +\Delta^{p-1}(f)w_{s(h)}$ be a transfer covariant of degree $>\gamma$. Then $h$ is decomposable.
\end{Proposition}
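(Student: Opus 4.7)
The plan is to exploit that $\deg h = \deg f > \gamma = \beta(\kk[V], \kk[V]^G)$, so $f$ lies in the $\kvgplus$-submodule of $\kk[V]$ generated by homogeneous polynomials of degree at most $\gamma$. Writing $f = \sum_i a_i g_i$ with $a_i \in \kvg$ of strictly positive degree and $g_i \in \kk[V]$ homogeneous of degree at most $\gamma$, I would form the candidate covariants
\[ h_i = \sum_{j=1}^{s(h)} \Delta^{k+j-1}(g_i)\,w_j \]
and attempt to prove the decomposition $h = \sum_i a_i h_i$. Each $h_i$ would then be a covariant of degree $\deg g_i \le \gamma < \deg h$, multiplied by an invariant of strictly positive degree, which exhibits $h$ as decomposable.

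The identity $h = \sum_i a_i h_i$ is straightforward: because $a_i$ is $\sigma$-fixed, $\Delta(a_i g_i) = \sigma(a_i)\sigma(g_i) - a_i g_i = a_i \Delta(g_i)$, and iterating yields $\Delta^m(a_i g_i) = a_i \Delta^m(g_i)$ for all $m \ge 0$. Summing the relation $\Delta^{k+j-1}(f) = \sum_i a_i \Delta^{k+j-1}(g_i)$ against $w_j$ for $j = 1, \dots, s(h)$ collects the terms into $\sum_i a_i h_i$.

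The main obstacle is checking that each $h_i$ genuinely lies in $\kk[V,W]^G$. For this I plan to apply Proposition \ref{elmer} to $F_i := \Delta^k(g_i)$. Since $\Delta^p = 0$, the weight of $F_i$ is at most $p - k$. The hypothesis $f_{s(h)} = \Delta^{p-1}(f) \neq 0$ forces $s(h) = p - k$, and since $s(h) \le \dim W = n$, the weight of $F_i$ is at most $n$. Proposition \ref{elmer} then produces a covariant $\sum_{j=1}^{d_i} \Delta^{j-1}(F_i)\,w_j$ where $d_i$ is the weight of $F_i$; this agrees with $h_i$ since the terms with $j > d_i$ vanish. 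With each $h_i$ confirmed to be a covariant of smaller degree than $h$, the identity $h = \sum_i a_i h_i$ completes the argument.
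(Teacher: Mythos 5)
Your proposal is correct and follows essentially the same route as the paper: write $f=\sum_i a_i g_i$ over a generating set of $\kk[V]$ as a $\kk[V]^G$-module in degrees at most $\gamma$, use the $\kk[V]^G$-linearity of $\Delta$ to pull the invariant coefficients out, and invoke Proposition \ref{elmer} to see that each $h_i$ is a covariant. Your extra care in verifying that the weight of $\Delta^k(g_i)$ is at most $s(h)\le n$ (so that Proposition \ref{elmer} actually applies) is a detail the paper leaves implicit, but the argument is the same.
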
 
 \begin{proof}
 
 Let  $g_1, \dots , g_t$ be a set of homogeneous polynomials of degree at most $\gamma$ generating   $\kk [V]$ as a module over  $\kk[V]^G$. So we can write $f=\sum_{1\le i\le t} q_ig_i$, where each $q_i\in \kk[V]^G_+$ is a positive degree invariant. Since $\Delta^j$ is  $\kk[V]^G$-linear, we have $\Delta^j(f)=\sum_{1\le i\le t} q_i\Delta^j (g_i)$ for $k\le j\le p-1$. It follows that 
 $$h=\sum_{1\le i\le t}q_i(\Delta^k(g_i)w_1+ \cdots +\Delta^{p-1}(g_i)w_{s(h)}).$$
 Note that $\Delta^k(g_i)w_1+ \cdots +\Delta^{p-1}(g_i)w_{s(h)}$ is a covariant for each $1\le i\le t$ by Proposition \ref{elmer}.  We also have  $q_i\in \kk[V]^G_+$ so it follows that $h$ is decomposable.
 \end{proof}
  Write $V=\oplus_{j=1}^m V_{n_j}$ as a sum of indecomposable modules. Note that $\kk[V\oplus V_1, W]^G=(S(V^*)\otimes S(V_1^*) )\otimes W)^G=\kk [V,W]^G\otimes\kk [V_1]$. Therefore we will assume that $n_j>1$ for all $j$; such representations are called reduced. Choose a basis $\{x_{i,j} \mid 1\le i\le n_j, 1\le j\le m\}$ for $V^*$, with respect to which we have 
  \[\sigma(x_{i,j}) = \left\{\begin{array}{lr} x_{i,j}+ x_{i+1,j} & i<n_j; \\ x_{i,j} & i=n_j. \end{array} \right. \] This induces a multidegree on $\kk[V] = \oplus_{\mathbf{d} \in \N^m} \kk[V]_{\mathbf{d}}$ which is compatible with the action of $G$. For $1\le j\le m$   we define $N_j= \prod_{k=0}^{p-1} \sigma^k x_{1,j}$, and note that the coefficient of $x_{1,j}^p$ in $N_j$ is 1. Given any $f \in \kk[V_{n_j}]$, we can therefore perform long division, writing
 \begin{equation}\label{normdecomp} f = q_jN_j + r\end{equation} where $q_j \in \kk[V_{n_j}]$ for all $j$ and $r \in \kk[V_{n_j}]$ has degree $<p$ in the variable $x_{1,j}$. This induces a vector space decomposition
 \[\kk[V_{n_j}] = N_j\kk[V_{n_j}] \oplus B_j\] where $B_j$ is the subspace of $\kk[V_{n_j}]$ spanned by monomials with $x_{1,j}$-degree $<p$, but the form of the action implies that $B_j$ and its complement are $\kk G$-modules, so we obtain a $\kk G$-module decomposition. 
 Since $\kk[V]=\otimes_{j=1}^m \kk[V_{n_j}]$, it follows that $$\kk[V]=N_j\kk[V] \oplus (B_j\otimes  \kk[V']),$$
 where $V'=V_{n_1}\oplus \cdots  \oplus V_{n_{j-1}}\oplus V_{n_{j+1}} \cdots \oplus V_{n_m}$. From this decomposition it follows that  if $M$ is a $\kk G$ direct summand of $\kk[V]_d$, then $N_jM$ is a $\kk G$ direct summand of $\kk[V]_{d+p}$ with the same isomorphism type. Further, any $f \in \kk[V]^G$ can be written as $f=qN_j+r$ with $q\in \kk[V]^G$ and $r\in (B_j\otimes  \kk[V'])^G $. If in addition $\deg(f) = (d_1, d_2, \ldots, d_m)$ with $d_j>p-n_j$, then the degree $d_j$ homogeneous component of $B_j$ is free by  \cite[2.10]{HughesKemper} and since tensoring a module with a free (projective) module gives a free (projective) module we may further assume, by Remark \ref{repthy}, that $r$ is in the  image of the transfer  map. 
  
If $h = \sum_{i=1}^{s(h)} \Delta^{i-1}(f)w_i \in \kk[V,W]^G$, we define the multidegree of $h$ to be that of $f$. Since $G$ preserves the multidegree, this is the same as the multidegree of $\Delta^{i-1}(f)$ for all $i \leq s(h)$. Then the analogue of this result for covariants is the following:  
 \begin{Proposition}
 \label{sum}
 Let $h$ be a covariant of multidegree $d_1,d_2, \ldots, d_m$ with $d_j>p-n_j$ for some $j$. Then there exists a covariant $h_1$ and a transfer covariant  $h_2$ such that $h=N_jh_1+h_2$.
 \end{Proposition}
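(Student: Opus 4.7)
The plan is to lift the $\kk G$-module decomposition $\kk[V] = N_j \kk[V] \oplus (B_j\otimes \kk[V'])$ recalled in the paragraph before the statement to the level of covariants using Proposition \ref{elmer}. I would first apply Proposition \ref{elmer} to write $h = \sum_{i=1}^{s(h)} \Delta^{i-1}(f) w_i$ for some $f \in \kk[V]$ of weight $s(h)$; this $f$ inherits the multidegree $(d_1, \ldots, d_m)$ of $h$. I would then split $f = N_j q + r$ with $q \in \kk[V]$ and $r \in B_j \otimes \kk[V']$.

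Because $N_j$ is invariant, $\Delta$ commutes with multiplication by $N_j$, and because the above decomposition is $\kk G$-stable we get $\Delta^i(f) = N_j \Delta^i(q) + \Delta^i(r)$ for all $i \geq 0$, with the two summands in the two respective submodules. This forces the weights of $q$ and $r$ to each be at most $s(h)$, so Proposition \ref{elmer} guarantees that
\[ h_1 = \sum_{i=1}^{s(h)} \Delta^{i-1}(q) w_i \quad \text{and} \quad h_2 = \sum_{i=1}^{s(h)} \Delta^{i-1}(r) w_i \]
are both covariants, and $h = N_j h_1 + h_2$ by linearity.

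The main step is to verify that $h_2$ is a transfer covariant. By construction $r$ has multidegree $(d_1, \ldots, d_m)$ with $d_j > p - n_j$, and the paragraph preceding the statement shows that this multidegree component of $B_j \otimes \kk[V']$ is a free $\kk G$-module, say $M \cong V_p^{\oplus t}$. Writing each copy of $V_p$ in an upper triangular basis $e_1, \ldots, e_p$, the image of $\Delta^{p - s(h_2)}$ on $V_p$ is precisely $\langle e_1, \ldots, e_{s(h_2)}\rangle$, which is the subspace of elements of weight at most $s(h_2)$; applied componentwise this gives $r = \Delta^{p - s(h_2)}(r')$ for some $r' \in \kk[V]$. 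Substituting yields
\[ h_2 = \sum_{i=1}^{s(h_2)} \Delta^{p - s(h_2) + i - 1}(r') w_i, \]
a transfer covariant with $k = p - s(h_2)$. The key point is identifying the correct power of $\Delta$, and the hypothesis $d_j > p - n_j$ is precisely what guarantees $r$ lies in a free summand and so admits such a preimage.
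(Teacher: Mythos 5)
Your proof is correct, but it takes a genuinely different route from the paper's. The paper argues by induction on the support $s(h)$: at each step it applies the decomposition $\Delta^{k-1}(f) = qN_j + \Delta^{p-1}(t)$ only to the \emph{invariant} top coefficient of $h$, lifts $q$ through $\Delta^{k-1}$ using the fact that multiplication by $N_j$ preserves isomorphism type, and subtracts off a covariant of strictly smaller support. You instead decompose the single polynomial $f$ produced by Proposition \ref{elmer} as $f = N_jq + r$ and push the whole decomposition through $\Delta^i$ at once, replacing the inductive peeling by the observation that in a free $\kk G$-module one has $\ker\Delta^{s} = \im\Delta^{p-s}$, so the weight-$s(h_2)$ element $r$ admits a preimage under $\Delta^{p-s(h_2)}$. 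Your version is more direct and has the advantage of exhibiting $h_2$ with a single witness $r'$, so it visibly matches the paper's definition of a transfer covariant; in the paper's induction $h_2$ accumulates as a sum of transfer covariants of varying supports, which strictly speaking needs a further (easy) remark to be recognised as a single transfer covariant, or else a slight weakening of the statement that still suffices for the application via Proposition \ref{transfer}. The paper's approach only ever invokes the weaker statement that invariants lying in free summands are transfers (Remark \ref{repthy}), whereas you use the full $\ker\Delta^s = \im\Delta^{p-s}$ on free modules --- but that is immediate from the same basis computation, so nothing is lost. One small point worth making explicit in your write-up: from $N_j\Delta^{s(h)}(q) + \Delta^{s(h)}(r) = 0$ you conclude that both summands vanish because they lie in complementary $\kk G$-direct summands, and then $\Delta^{s(h)}(q)=0$ because $\kk[V]$ is a domain and $N_j \neq 0$.
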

  \begin{proof}
 We proceed by induction on the support $s(h)$ of $h$. If $s(h)=1$, then by Proposition \ref{elmer}, we have that $h=fw_1$ with $f\in \kk [V]^G$. Then we can write $f=qN_j+\Delta^{p-1}(t)$ for some $q\in \kk[V]^G$ and $t \in \kk[V]$. Then both $qw_1$ and $\Delta^{p-1}(t)w_1$ are covariants by Proposition \ref{elmer} and therefore $h=qN_jw_1+\Delta^{p-1}(t)w_1$ gives us the desired decomposition. 
 
 Now assume that  $s(h)=k$. Then by Proposition \ref{elmer} there exists $f\in \kk [V]$ such that $$h=fw_1+\Delta (f)w_2+\cdots +\Delta ^{k-1}(f)w_k,$$
 with $\Delta^k(f)=0$. Since $\Delta^{k-1}(f) \in \kk[V]^G$ and $d_j>p-n_j$, we can write $\Delta^{k-1}(f)=qN_j+\Delta^{p-1}(t)$ for some $q\in \kk[V]^G$ and $t \in \kk[V]$. It follows that $qN_j$ is in the image of $\Delta^{k-1}$. But since multiplication by $N_j$ preserves the isomorphism type of a module, it  follows that $q$ is in the image of $\Delta^{k-1}$. Write $q=\Delta^{k-1}(f')$ with  $f'\in \kk[V]$.
    Set $h_1=f'w_1+\Delta (f')w_2+\cdots +\Delta ^{k-1}(f')w_k$ and  $h_2=\Delta^{p-k}(t)w_1+\cdots +\Delta^{p-1}(t)w_k$. Since $\Delta^{k-1}(f')\in \kk[V]^G$, $h_1$ is a  covariant by Proposition  
 \ref{elmer}. Consider the covariant $h'=h-N_jh_1-h_2$. Since $\Delta^{k-1}(f)=\Delta^{p-1}(t)+\Delta ^{k-1}(f')N_j$, the support of $h'$ is strictly smaller than the support of $h$. Moreover, $h_2$ is a transfer covariant and so  the assertion of the proposition follows by induction. 
 \end{proof}
 
 We obtain the following upper bound for the Noether number of covariants:
 \begin{Proposition}\label{ub}
 $\beta(\kk[V,W]^G) \leq \max(\beta(\kk[V],\kk[V]^G), mp-\dim(V))$.
 \end{Proposition}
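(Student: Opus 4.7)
The plan is to pick a homogeneous covariant $h$ of total degree $d > \max(\beta(\kk[V],\kk[V]^G), mp-\dim(V))$ and show it is decomposable. Since the multidegree is preserved by $G$, I may assume $h$ is multihomogeneous, of multidegree $(d_1,\ldots,d_m)$ with $\sum_j d_j = d$.

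First I would use the assumption $d > mp - \dim(V) = mp - \sum_j n_j$. Rewriting, $\sum_j (d_j - (p-n_j)) > 0$, so by pigeonhole there exists some index $j$ with $d_j > p - n_j$. This is exactly the hypothesis needed to invoke Proposition \ref{sum}, which lets me write
\[ h = N_j h_1 + h_2, \]
where $h_1 \in \kk[V,W]^G$ and $h_2$ is a transfer covariant.

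Next I would handle the two summands. Since $N_j$ is a homogeneous invariant of positive degree $p$, the covariant $h_1$ has strictly smaller degree $d - p < d$, so $N_j h_1$ lies in the $\kk[V]^G_+$-submodule generated by covariants of strictly smaller degree than $h$. For the second summand, $h_2$ is a transfer covariant of degree $d > \gamma = \beta(\kk[V],\kk[V]^G)$, so Proposition \ref{transfer} applies and $h_2$ is decomposable.

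Combining, $h = N_j h_1 + h_2$ is a sum of two decomposable covariants, hence itself decomposable, which gives the bound $\beta(\kk[V,W]^G) \leq \max(\beta(\kk[V],\kk[V]^G), mp - \dim(V))$. I do not expect a serious obstacle: the genuine content lies in Propositions \ref{transfer} and \ref{sum}, and the only subtlety in the present proof is the pigeonhole step ensuring that the total-degree bound $d > mp - \dim(V)$ forces at least one component of the multidegree past the threshold $p - n_j$ required by Proposition \ref{sum}.
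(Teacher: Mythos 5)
Your proposal is correct and follows essentially the same route as the paper: reduce to a multihomogeneous $h$, observe that $d>mp-\dim(V)$ forces $d_j>p-n_j$ for some $j$, apply Proposition \ref{sum} to write $h=N_jh_1+h_2$, and dispose of $h_2$ via Proposition \ref{transfer}. The only difference is that you spell out the pigeonhole step and the decomposability of the $N_jh_1$ term, both of which the paper leaves implicit.
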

 
 \begin{proof} Let $h \in \kk[V,W]^G$ with degree $d> \max(\beta(\kk[V],\kk[V]^G), mp-\dim(V))$. Let $(d_1, d_2, \ldots, d_m)$ be the multidegree of $h$. Then we must have $d_j>p-n_j$ for some $j$. Consequently we may apply Proposition \ref{sum}, writing 
 $$h = N_j h_1 + h_2$$ where $h_2$ is a transfer covariant. Since $\deg(h_2)> \beta(\kk[V], \kk[V]^G)$, $h_2$ is decomposable by Proposition \ref{transfer}, and so we have shown that $h$ is decomposable.
 \end{proof}
 
 \section{Lower bounds}
 
 Indecomposable transfers are one method of obtaining lower bounds for $\beta(\kk[V]^G)$. The analogous result for covariants is:
 \begin{Lemma}\label{lb}
 Let $n \ge 2$ and let $\Delta^{p-1}(f)\in \kk [V]^G$ be an indecomposable homogeneous transfer. Then the transfer covariant $$h=\Delta^{p-n}(f)w_1+\cdots +\Delta^{p-1}(f)w_n$$
 is indecomposable.
 \end{Lemma}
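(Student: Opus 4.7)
My plan is to argue by contradiction, exploiting the fact that the $w_n$-coefficient of any covariant of support $n$ is itself a $G$-invariant. If $h$ were decomposable in terms of lower-degree covariants, then comparing $w_n$-coefficients would yield a corresponding decomposition of $\Delta^{p-1}(f)$ in $\kk[V]^G$, contradicting its indecomposability as an invariant.

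Carrying this out, I first assume $h = \sum_i g_i h_i$ with each $g_i \in \kk[V]^G_+$ of positive degree and each $h_i$ a homogeneous covariant of degree strictly less than $\deg(h) = \deg(f)$; the standard reduction to homogeneous components means we lose no generality. Applying Proposition \ref{elmer} to each $h_i$, I write
\[h_i = f_i w_1 + \Delta(f_i) w_2 + \cdots + \Delta^{s_i - 1}(f_i) w_{s_i}\]
for some $f_i \in \kk[V]$ of weight $s_i = s(h_i) \leq n$. Comparing the coefficients of $w_n$ on both sides of the identity $h = \sum_i g_i h_i$, the left-hand side yields $\Delta^{p-1}(f)$ while on the right only those $h_i$ with $s_i = n$ contribute, giving
\[\Delta^{p-1}(f) = \sum_{i:\, s_i = n} g_i \, \Delta^{n-1}(f_i).\]
Since each such $f_i$ has weight exactly $n$, the element $\Delta^{n-1}(f_i)$ is annihilated by $\Delta$ and is therefore a $G$-invariant; and because $\Delta$ preserves degree, $\deg \Delta^{n-1}(f_i) = \deg(f_i) = \deg(h_i) < \deg(h) = \deg \Delta^{p-1}(f)$. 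Thus the displayed identity exhibits $\Delta^{p-1}(f)$ as a sum of products of positive-degree invariants with invariants of strictly smaller degree, contradicting the hypothesis that $\Delta^{p-1}(f)$ is indecomposable in $\kk[V]^G$.

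I do not anticipate any substantive obstacle. The one point to flag is that when $n < p$, the invariants $\Delta^{n-1}(f_i)$ appearing in the decomposition of $\Delta^{p-1}(f)$ are generally not themselves transfers; however, this is harmless, because the hypothesis is only that $\Delta^{p-1}(f)$ is indecomposable as an \emph{invariant}, not specifically within the image of the transfer, so an expression as a sum of products of positive-degree invariants with any invariants of strictly smaller degree already suffices for the contradiction.
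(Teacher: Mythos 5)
Your proof is correct and follows essentially the same route as the paper: assume a decomposition $h=\sum_i q_i h_i$, use Proposition \ref{elmer} to identify the $w_n$-coefficient of each $h_i$ as a positive-degree invariant of smaller degree, and compare $w_n$-coefficients to contradict the indecomposability of $\Delta^{p-1}(f)$. The only cosmetic difference is that you restrict the sum to the $h_i$ of support exactly $n$ (the others contribute $0$ to the $w_n$-coefficient), which changes nothing.
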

 \begin{proof}
 Assume on the contrary that $h$ is decomposable. Then  there exist homogeneous $q_i\in \kk[V]^G_+ $
and $h_i\in \kk[V,W]^G$   such that $h=\sum_{1\le i\le t}q_ih_i$. Write $h_i=h_{i,1}w_1+\cdots +h_{i,n}w_n$ for $1\le i\le t$. Then we have $\Delta^{p-1}(f)=\sum_{1\le i\le t}q_ih_{i,n}$. By Proposition \ref{elmer} we have  $\Delta (h_{i,n-1})=h_{i,n}$ and so $h_{i,n}\in \kk[V]^G_+$ because $n\ge 2$. It follows that  $\sum_{1\le i\le t}q_ih_{i,n}$ is a decomposition of $\Delta^{p-1}(f)$  in terms of invariants of strictly smaller degree, contradicting the indecomposability of $\Delta^{p-1}(f)$.\end{proof}
 
 \begin{Corollary}\label{lbcor}
 Suppose $n \geq 2$ and $\beta(\kk[V]^G) > \max(p,mp-\dim(V))$. Then $\beta(\kk[V]^G) \leq \beta(\kk[V,W]^G)$.
 \end{Corollary}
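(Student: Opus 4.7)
The plan is to exhibit an indecomposable covariant of degree $d := \beta(\kk[V]^G)$; since $\beta(\kk[V,W]^G)$ is, by definition, at least the degree of any indecomposable covariant, this will yield the stated inequality. The natural source of indecomposable covariants is Lemma \ref{lb}, which promotes any homogeneous indecomposable transfer in $\kk[V]^G$ of degree $d$ (using $n \geq 2$, which we have) to an indecomposable transfer covariant of the same degree. So the task reduces to producing an indecomposable \emph{transfer} of degree $d$ in $\kk[V]^G$.

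To do this I would start with any homogeneous indecomposable $f \in \kk[V]^G$ of degree $d$, which exists by definition of $\beta(\kk[V]^G)$. Let $(d_1,\ldots,d_m)$ be its multidegree. Since $\sum_j d_j = d > mp - \dim(V) = \sum_j(p - n_j)$, there must exist $j$ with $d_j > p - n_j$. For this $j$, the decomposition recalled immediately before Proposition \ref{sum} (which invokes Hughes--Kemper [2.10] to push the remainder into the image of $\Delta^{p-1}$) supplies
\[ f = qN_j + \Delta^{p-1}(t), \qquad q \in \kk[V]^G,\ t \in \kk[V]. \]
Because $\deg(N_j) = p$ and $d > p$, the factor $q$ has positive degree, so $qN_j$ is a product of two positive-degree invariants and hence decomposable. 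Since $f$ is indecomposable and the decomposable invariants form a subspace, the transfer summand $\Delta^{p-1}(t)$ must itself be indecomposable. Applying Lemma \ref{lb} to $t$ then produces an indecomposable covariant of degree $d$, finishing the proof.

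The crux is the last passage, from an arbitrary indecomposable invariant of degree $d$ to one that is actually a transfer, and it is here that both hypotheses do genuine work. The condition $d > mp - \dim(V)$ is needed to guarantee some multidegree component is large enough to invoke the Hughes--Kemper freeness and thus land the remainder in the image of $\Delta^{p-1}$, while $d > p$ is what makes the $N_j$-multiple decomposable so that indecomposability is inherited by the transfer summand. Neither can be removed, which is consistent with the exceptional case $V = V_2$ (where $\beta(\kk[V]^G) = p$) singled out in Theorem \ref{main}.
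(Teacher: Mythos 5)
Your proof is correct, and it shares the paper's overall strategy: both arguments reduce the corollary to producing a homogeneous indecomposable \emph{transfer} of degree $\beta(\kk[V]^G)$ and then invoke Lemma \ref{lb}. Where you differ is in how that transfer is produced. The paper simply cites \cite[Lemma~2.12]{HughesKemper} --- $\kk[V]^G$ is generated by the norms $N_1,\ldots,N_m$, invariants of degree at most $mp-\dim(V)$, and transfers --- and observes that since $\beta(\kk[V]^G)$ exceeds both $p$ and $mp-\dim(V)$, the indecomposable generator in that degree must be a transfer. You instead reuse the division-by-$N_j$ decomposition $f = qN_j + \Delta^{p-1}(t)$ already set up before Proposition \ref{sum} (resting on \cite[2.10]{HughesKemper}), and argue that $qN_j$ is decomposable because $\deg q = d-p>0$, so the transfer summand inherits indecomposability. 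This buys a more self-contained argument that recycles machinery the paper has already built, at the cost of slightly more work; the paper's version is shorter but leans on an additional external result. One small point you should patch: you take a homogeneous indecomposable $f$ and immediately speak of ``its multidegree,'' but a homogeneous invariant need not be multihomogeneous. Since the decomposable invariants form a multigraded subspace of $\kk[V]^G$ (the multigrading being $G$-stable), at least one multihomogeneous component of $f$ is itself indecomposable, and you should replace $f$ by such a component before applying the pigeonhole argument $\sum_j d_j > \sum_j (p-n_j)$. With that adjustment the argument is complete.
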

 
 \begin{proof} By \cite[Lemma~2.12]{HughesKemper}, $\kk[V]^G$ is generated by the norms $N_1,N_2, \ldots, N_m$, invariants of degree at most $mp-\dim(V)$, and transfers. Since there exists an indecomposable invariant of degree $\beta(\kk[V]^G)$, if the hypotheses of the corollary above hold, then $\kk[V]^G$ contains an indecomposable transfer with this degree. By Lemma \ref{lb}, $\kk[V,W]^G$ contains a transfer covariant of degree $\beta(\kk[V]^G)$ which is indecomposable, from which the conclusion follows.
 \end{proof}
 
 \section{Main results}
 
We are now ready to prove Theorem \ref{main}. Note that $\kk[V,V_1]^G$ is generated over $\kk[V]^G$ by $w_1$ alone, which has degree zero, and therefore $\beta(\kk[V,V_1]^G) = 0$. For this reason we assume $n \geq 2$ throughout.
\begin{proof}
Suppose first that $n_j>3$ for some $j$. Then by \cite[Proposition~1.1(a)]{FleischmannSezerWoodcock}, we have
\[\beta(\kk[V]^G) = m(p-1)+(p-2).\] Since $V$ is reduced we have $\dim(V) \geq 2m$ and hence
\[\beta(\kk[V]^G) > m(p-2) \geq mp-\dim(V).\]
Also, $\beta(\kk[V]^G) \geq 2p-3 >p$ since $n_j \leq p$ for all $j$. Therefore Corollary \ref{lbcor} implies that $\beta(\kk[V]^G) \leq \beta(\kk[V,W]^G)$.
On the other hand, \cite[Lemma~3.3]{FleischmannSezerWoodcock} shows that the top degree of $\kk[V]/\kk[V]^G_+\kk[V]$ is bounded above by $m(p-1)+(p-2)$. By the graded Nakayama Lemma it follows that $\beta(\kk[V],\kk[V]^G) \leq m(p-1)+(p-2)$. We have already shown that this number is at least $mp-\dim(V)+1$, so by Proposition \ref{ub} we get that
\[\beta(\kk[V,W]^G) \leq m(p-1)+(p-2) = \beta(\kk[V]^G)\] as required.

Now suppose that $n_i \leq 3$ for all $i$ and $n_j=3$ for some $j$. Then by \cite[Proposition~1.1(b)]{FleischmannSezerWoodcock}, we have
\[\beta(\kk[V]^G) = m(p-1)+1.\] Since $V$ is reduced we have $\dim(V) \geq 2m$ and hence
\[\beta(\kk[V]^G) > m(p-2) \geq mp-\dim(V).\] Also $\beta(\kk[V]^G) \geq 2p-1>p$ provided $m \geq 2$. In that case Corollary \ref{lbcor} applies. If $m=1$ then Dickson \cite{DicksonMadison} has shown that $\kk[V]^G=\kk[x_1, x_2, x_3]^G$  is minimally generated by the invariants $x_3$, $x_2^2-2x_1x_3-x_2x_3$, $N$, $\Delta^{p-1}(x_1^{p-1}x_2)$. It follows that $\Delta^{p-1}(x_1^{p-1}x_2)$ is an indecomposable transfer, so by Lemma \ref{lb},  $\kk[V,W]^G$ contains an indecomposable transfer covariant of degree $p = \beta(\kk[V]^G)$. In either case we obtain
\[\beta(\kk[V,W]^G) \geq \beta(\kk[V]^G).\]

On the other hand, by \cite[Corollary~2.8]{SezerShankCoinv}, $m(p-1)+1$ is an upper bound for the top degree of $\kk[V]/\kk[V]^G_+$. By the same argument as before we get $\beta(\kk[V]^G,\kk[V]) \leq m(p-1)+1$. We have already shown that this number is at least $mp-\dim(V)+1$, so by Proposition \ref{ub} we get that
\[\beta(\kk[V,W]^G) \leq m(p-1)+1 = \beta(\kk[V]^G)\] as required.

It remains to deal with the case $n_i=2$ for all $i$, i.e. $V = mV_2$. We assume $m \geq 2$. In this case Campbell and Hughes \cite{CampbellHughes} showed that $\beta(\kk[V]^G) = (p-1)m$. As $\dim(V) =2m$ we have $\beta(\kk[V]^G)>m(p-2) = mp-\dim(V)$. If $m \geq 3$ or $m = 2$ and $p>2$ then we have
$\beta(\kk[V]^G)>p$ and Corollary \ref{lbcor} applies. In case $m=2=p$, $\kk[V]^G = \kk[x_{1,1},x_{2,1},x_{1,2},x_{2,2}]^G$ is a hypersurface, minimally generated by $\{x_{2,1},N_1,x_{2,2},N_2, \Delta^{p-1}(x_{1,1}x_{1,2})\}$. In particular $ \Delta^{p-1}(x_{1,1}x_{1,2})$ is an indecomposable transfer, so by Lemma \ref{lb}, $\kk[V,W]^G$ contains an indecomposable transfer covariant of degree 2. In both cases we get \[\beta(\kk[V,W]^G) \geq \beta(\kk[V]^G).\]
On the other hand, by \cite[Theorem~2.1]{SezerShankCoinv}, the top degree of $\kk[V]/\kk[V]^G_+ \kk[V]$ is bounded above by $m(p-1)$. We have already shown this number is at least $mp-\dim(V)+1$. Therefore by Proposition \ref{ub} we get $\beta(\kk[V,W]^G) \leq \beta(\kk[V]^G)$ as required.
\end{proof}
 
 \begin{rem} The only reduced representation not covered by Theorem \ref{main} is $V=V_2$. An explicit minimal set of generators of $\kk[V_2,W]^G$ as a module over $\kk[V_2]^G$ is given in \cite{ElmerCov}, the result is $$\beta(\kk[V_2,W]) = n-1.$$ This is the only situation in which the Noether number is seen to depend on $W$.
 \end{rem}
 
 \begin{rem} Suppose $V$ is any reduced $\kk G$-module and $W = \bigoplus_{i=1}^r W_i$ is a decomposable $\kk G$-module. Then
 \[\kk[V,W]^G = ((S(V^*)\otimes (\oplus_{i=1}^r W_i))^G = \bigoplus_{i=1}^r (S(V^*) \otimes W_i)^G.\]
 So $\beta(\kk[V,W]^G) = \max\{(\beta(\kk[V,W_i]^G): i=1, \ldots, r)\} = \beta(\kk[V]^G)$ unless $V$ is indecomposable of dimension 2, in which case we have \[\beta(\kk[V_2,W]^G) = \max\{(\beta(\kk[V_2,W_i]^G): i=1, \ldots, r)\} = \max\{\dim(W_i)-1: i=1, \ldots, r\}. \] 
 Thus, the results of this paper can be used to compute $\beta(\kk[V,W]^G)$ for arbitrary $\kk G$-modules $V$ and $W$.
 
 \end{rem}

\bibliographystyle{plain}
\bibliography{MyBib}

\end{document}